\newcommand{\R}{\mathbb{R}}
\newcommand{\re}{\mathrm{Re}}
\newcommand{\mez}{\frac{1}{2}}  
\renewcommand{\d}{\partial}
\newcommand{\conj}[1]{\overline{#1}}
\newcommand{\modu}[1]{\left|#1\right|}
\newcommand{\pare}[1]{\left(#1\right)}
\newcommand{\wlim}{\rightharpoonup}
\newcommand{\K}{\mathcal K}
\newcommand{\scalar}[2]{\langle#1,#2\rangle}
\def\le{\leqslant}
\def\ge{\geqslant}
\theoremstyle{plain}
\newtheorem{theorem}{Theorem}[section]
\newtheorem{lemma}[theorem]{Lemma}
\newtheorem{proposition}[theorem]{Proposition}
\theoremstyle{definition}
\newtheorem{remark}[theorem]{Remark}
\newtheorem*{remark*}{Remark}
\numberwithin{equation}{section}
\begin{document}
\title[Solitary waves for dipolar quantum gases]
{Existence of solitary waves in dipolar quantum gases}
\author[P. Antonelli]{Paolo Antonelli}
\address[P. Antonelli]{Department of Applied Mathematics and
Theoretical Physics\\
CMS, Wilberforce Road\\ Cambridge CB3 0WA\\ England}
\email{p.antonelli@damtp.cam.ac.uk}
\author[C. Sparber]{Christof Sparber}
\address[C. Sparber]{Department of Mathematics, Statistics, and Computer Science\\
University of Illinois at Chicago\\
851 S. Morgan Street\\
Chicago, IL 60607, USA}
\email{sparber@math.uic.edu}
\begin{abstract}
We study a nonlinear Schr\"odinger equation arising in the 
mean field description of dipolar quantum gases. Under the assumption of sufficiently strong dipolar 
interactions, the existence of standing waves, and hence solitons, is proved together with some of their properties. 
This gives a rigorous argument for the possible existence of solitary waves in Bose-Einstein condensates, which originate 
solely due to the dipolar interaction between the particles.
\end{abstract}

\date{\today}

\subjclass[2000]{35J60, 35J20, 81Q99}
\keywords{Nonlinear Schr\"odinger equation, dipolar quantum gases, solitary waves, Pohozaev identities}

\thanks{This publication is based on work supported by Award No. KUK-I1-007-43, funded by the King Abdullah University of Science and Technology (KAUST). 
C. Sparber has been supported by the Royal Society through his 
University research fellowship.}

\maketitle

\section{Introduction}\label{intro}

The experimental realization of \emph{Bose-Einstein condensation} (BEC) in dilute gases in 1995 \cite{AEMWC} has marked the beginning of a new era in atomic physics 
and quantum optics. Ever since then, continuous efforts have been undertaken to extend BEC 
physics towards new regimes offering different theoretical and experimental challenges. A particularly interesting 
research field concerns the study of \emph{solitary waves} within BECs, see e.g. \cite{AG} and the references given therein.
More recently, so-called \emph{dipolar BECs}, i.e. condensates made out of particles possessing a permanent electric or magnetic dipole moment \cite{SSZM}, have received much attention. This is due 
to the fact that the additional dipolar interactions between particles are both \emph{long-range} and \emph{non-isotropic} and therefore crucially influence the ground state properties, stability, 
and dynamics of the condensate, see \cite{LMSLP} for a broad review of this subject. In addition, the possibility of a novel class of solitary waves within such systems has been discussed in e.g. \cite{GMS, NPS, PeSa}. 
Motivated by these reports, it is the aim of this paper to rigorously prove the existence of solitary waves within dipolar quantum gases.

To this end, we shall be concerned with the \emph{mean-field description} of (dilute) dipolar quantum gases, based on an 
(augmented) \emph{Gross-Pitaevskii equation}, cf.  \cite{LMSLP} for the validity of such a description. Following \cite{YY, YY1}, we shall describe the time-evolution of such systems 
by the following Gross-Pitaevskii type model
\begin{equation}\label{eq:dip_BEC1}
i\hbar\d_t\psi=-\frac{\hbar^2}{2m}\Delta\psi+g|\psi|^2\psi+\sigma^2(K\ast|\psi|^2)\psi, \quad t,\in \R, x\in \R^3,
\end{equation}
where $g=4\pi\hbar^2Na/m$, for a $N \in \mathbb N$ number of particles, with mass $m>0$ and scattering length $a\in\R$. Finally $\sigma^2\ge0$ denotes the strength of the dipole moment. 
In \eqref{eq:dip_BEC1} we also denote by $‘‘ \ast"$ the convolution w.r.t. $x$ between the local density $\rho=|\psi|^2$ and 
\begin{equation}\label{defn:dipole_pot}
K(x)=\frac{1-3\cos^2\theta}{|x|^3},
\end{equation}
where $\theta=\theta(x)$ is the angle between $x\in\R^3$ and a given (fixed) dipole axis $n\in\R^3$, such that $|n|=1$, i.e.
\begin{equation*}
\cos\theta=\frac{x\cdot n}{|x|}.
\end{equation*}
The non-local potential $K\ast \rho$ describes long-range dipolar interactions between the particles, whereas the local (cubic) nonlinearity $\propto |\psi|^2\psi$ describes the usual 
\emph{contact interaction} between particles (which is short-range, isotropic and characterized by the scattering length $a\in\R$).

For the upcoming mathematical analysis it will be more convenient to rescale \eqref{eq:dip_BEC1} into the following dimensionless form
\begin{equation}\label{eq:dip_BEC}
i\d_t\psi=-\frac{1}{2}\Delta\psi+\lambda_1|\psi|^2\psi+\lambda_2(K\ast|\psi|^2)\psi,
\end{equation}
where $\lambda_1, \lambda_2 \in \R$ are some constants (depending on the physical parameters given above), which describe the strength of the two nonlinearities, respectively. 
The time-dependent equation \eqref{eq:dip_BEC1}, in the presence of an additional (quadratic) confining potential,
has been rigorously analyzed by Carles, Markowich and the second author in \cite{CMS}. Several existence and uniqueness results are discussed in \cite{CMS}, 
as is the possible occurrence of \emph{finite time blow-up} of solutions, which physically corresponding to the possible collapse of the BEC \cite{Lu}, see Remark 1.3 below for more details. 

From the mathematical point of view, it is well known (see e.g. \cite{Caz}) that the possibility of finite time blow-up is strongly linked to the existence of standing wave solutions to \eqref{eq:dip_BEC}, i.e.
\begin{equation*}
\psi(t, x)=e^{i\omega t}u(x),\quad \omega \in \R.
\end{equation*}
Obviously, for $\psi(t,x)$ to be a solution of \eqref{eq:dip_BEC}, the profile $u\in L^2(\R^3)$ has to solve the following nonlinear elliptic equation
\begin{equation}\label{eq:stand_wave}
-\frac{1}{2}\Delta u+\lambda_1|u|^2u+\lambda_2(K\ast|u|^2)u+\omega u=0,
\end{equation}
which will be the main object of study in our work. To this end, we first recall that the total energy associated to associated to \eqref{eq:stand_wave}, is given by
\begin{equation}\label{eq:en}
E(u):=\int_{\R^3}\frac{1}{2}|\nabla u|^2+\frac{\lambda_1}{2}|u|^4+\frac{\lambda_2}{2}(K\ast|u|^2)|u|^2dx
\equiv T(u)+V(u),
\end{equation}
where
\begin{equation}
T(u):= \mez \int_{\R^3}|\nabla u|^2 dx,
\end{equation}
is the kinetic energy, and 
\begin{equation}\label{en_pot}
V(u):=\int_{\R^3}\frac{\lambda_1}{2}|u|^4+\frac{\lambda_2}{2}(K\ast|u|^2)|u|^2 dx, 
\end{equation} 
is the nonlinear potential energy. At this point it might be tempting to study \eqref{eq:stand_wave} via minimization of the energy $E(u)$. However, it is well known, that 
even without the dipole nonlinearity i.e. $\lambda_2 = 0$, this approach fails, since, on the one hand, the energy 
functional in three spatial dimensions 
is found to be \emph{unbounded from below} in the case $\lambda_1<0$ (by invoking the Gagliardo-Nirenberg inequality). On the other hand, when 
$\lambda_2=0$, $\lambda_1>0$ the minimization problem becomes trivial. In other words, equation \eqref{eq:stand_wave} turns out to be \emph{$L^2$-supercritical} in the sense of \cite{Caz} and this 
problem is in fact enhanced by the presence of the dipole nonlinearity. Nevertheless, we shall still follow a variational approach 
for studying the existence of solutions to \eqref{eq:dip_BEC}. This approach is based on the chocie a suitable functional (see Section \ref{sec:existence} below) 
which has first been introduced in \cite{We} in the case of local nonlinearities. Since then, it has proved its use in different applications, 
in particular for water wave models of Davey-Stewartson type \cite{Ci, EE, PSSW}, which have a similar mathematical structure as the model we consider here.

\begin{theorem}\label{thm:main}
Let $\lambda_1, \lambda_2 \in \R$ be such that the following condition holds:
\begin{equation}\label{eq:necess}
\lambda_1<
\left \{
\begin{split} & \frac{4\pi}{3}\lambda_2, \quad  \text{if $\lambda_2 >0$,} \\
  - \, &  \frac{8\pi}{3} \lambda_2,  \quad \text{if $\lambda_2 <0$.}\
\end{split}
\right.
\end{equation}
Then there exists $u\in H^1(\R^3)$ solution to 
\eqref{eq:stand_wave} with corresponding $\omega >0$. 
Furthermore the solution $u$ satisfies the following properties:
\begin{enumerate}
\item $u$ is real-valued and $u(x)\ge 0$, $\forall x\in \R^3$.
\item $u$ is radially symmetric in the $x_1, x_2-$plane and axially symmetric with respect to the $x_3-$axis, i.e.
\begin{gather*}
u(x_1\cos\alpha+x_2\sin\alpha, -x_1\sin\alpha+x_2\cos\alpha, x_3)=u(x_1, x_2, x_3),\\ 
\forall\;\alpha\in[0, 2\pi],\;\textrm{and}\;u(x_1, x_2, -x_3)=u(x_1, x_2, x_3).
\end{gather*}
\item The energy of $u$ satisfies $E(u) = \frac{1}{3}T(u) > 0$. 
\item $u\in H^s(\R^3)$ for all $s\ge 1$.
\item There exist positive constants $C_1, C_2$, such that:
$$
e^{C_1 |x|} (|u(x)| + |\nabla u(x)| ) \le C_2, \ \forall x\in \R^3.
$$
\end{enumerate}
\end{theorem}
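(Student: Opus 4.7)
The plan is to bypass the $L^2$-supercritical obstruction by minimizing a suitable scale-invariant Weinstein-type functional, and then to read off the claimed properties from the associated Euler--Lagrange equation. Concretely, I would introduce
$$
J(u) := \frac{T(u)^3\, M(u)}{V(u)^2}, \qquad M(u) := \|u\|_{L^2}^2,
$$
whose exponents are forced by the invariance of $J$ under the two-parameter rescaling $u_{\mu,\lambda}(x) := \mu\, u(\lambda x)$ (a consequence of the $(-3)$-homogeneity of $K$), and seek to minimize $J$ on
$$
\mathcal A := \{u \in H^1(\R^3)\setminus\{0\} : V(u) < 0\}.
$$
A first check is that $\mathcal A \neq \emptyset$ precisely under condition \eqref{eq:necess}: by a direct spherical-harmonic computation, $\widehat K$ is a bounded Fourier multiplier whose essential range is $[-\tfrac{4\pi}{3},\tfrac{8\pi}{3}]$, with the extremes attained for $\xi \perp n$ and $\xi \parallel n$ respectively. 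Concentrating $\widehat{|u|^2}$ in a thin cone around the appropriate direction drives $\int(K*|u|^2)|u|^2$ close to $-\tfrac{4\pi}{3}\|u\|_{L^4}^4$ or $\tfrac{8\pi}{3}\|u\|_{L^4}^4$, and the inequalities in \eqref{eq:necess} are exactly the thresholds at which $V(u) < 0$ becomes achievable.

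Positivity of $\alpha := \inf_{\mathcal A} J$ follows by combining $\widehat K \in L^\infty$ (which controls the nonlocal term in $L^2$ via Plancherel) with the 3D Gagliardo--Nirenberg inequality $\|u\|_{L^4}^4 \leq C\, T(u)^{3/2} M(u)^{1/2}$, yielding $|V(u)| \leq C\, T(u)^{3/2} M(u)^{1/2}$ and hence $J(u) \geq C^{-2}$. For attainment I would take a minimizing sequence $\{u_n\} \subset \mathcal A$ and use the two-parameter invariance to normalize $T(u_n) = M(u_n) = 1$, so that $\{u_n\}$ is $H^1$-bounded with $V(u_n) \to -\alpha^{-1/2}$. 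The main obstacle is passing to a strong limit: since $K$ is neither positive nor radial, Schwarz symmetrization is unavailable, so I would invoke Lions' concentration-compactness principle. Vanishing is excluded because it would force $\|u_n\|_{L^4}\to 0$, hence $V(u_n) \to 0$, contradicting $V(u_n) \to -\alpha^{-1/2}$; dichotomy is excluded by the strict subadditivity of $\alpha$ under splitting of the minimizing sequence. The nonlocal term passes to the limit by boundedness of $K*$ on $L^p$ combined with strong $L^4$-convergence of the translates, and weak lower semicontinuity of $J$ then furnishes a minimizer $u_\star \in \mathcal A$. I expect this concentration-compactness step, together with the handling of the sign-changing nonlocal term, to be the technical heart of the proof.

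The Lagrange multiplier equation for $u_\star$ takes the form
$$
-\tfrac{a}{2}\Delta u_\star + b\lambda_1 |u_\star|^2 u_\star + b\lambda_2 (K*|u_\star|^2) u_\star + c\, u_\star = 0,
$$
for positive constants $a,b,c$ determined by $T(u_\star),M(u_\star),V(u_\star)$; a final rescaling $u(x) := \sigma u_\star(\tau x)$ with suitable $\sigma,\tau > 0$ reduces this to \eqref{eq:stand_wave} with some $\omega > 0$. Property (1) follows from the fact that $V$ depends only on $|u|^2$ and $T(|u|) \leq T(u)$, so $|u|$ is still a minimizer; the strong maximum principle (noting $u \in L^\infty$ by (4)) then gives $u \geq 0$. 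For (2) I would restrict the minimization a priori to the closed subspace of $H^1(\R^3)$ of functions invariant under rotations about $n$ and under $x_3 \mapsto -x_3$, and invoke Palais' principle of symmetric criticality. Property (3) follows by combining the identity from multiplying \eqref{eq:stand_wave} by $u$, namely $T(u) + 2V(u) + \omega M(u) = 0$, with the Pohozaev identity from the multiplier $x\cdot\nabla u$, namely $T(u) - 3V(u) - 3\omega M(u) = 0$ (the dipole term appearing with the same coefficient as the cubic term because $K$ is homogeneous of degree $-3$); eliminating $\omega M(u)$ yields the stated relation between $E(u)$ and $T(u)$. Property (4) is a standard $H^s$-bootstrap using that $K*$ is bounded on every $L^p$, $1<p<\infty$, by Calderón--Zygmund theory, so that $u \in H^1 \hookrightarrow L^6$ places the nonlinearity in $L^2$ and hence $u \in H^2$, and one iterates. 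Finally, (5) follows by the classical comparison argument: $u$ solves $(-\tfrac{1}{2}\Delta + \omega)u = f$ with $f \in L^\infty$ decaying at infinity (by (4) and Sobolev embedding into $C^0$), and comparison with the Bessel kernel of $-\tfrac{1}{2}\Delta + \omega$ — which decays like $e^{-c|x|}/|x|$ in 3D — delivers the exponential bound for $u$; differentiating the equation repeats the argument for $\nabla u$.
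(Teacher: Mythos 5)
Your overall strategy coincides with the paper's: both minimize a scale-invariant Weinstein-type quotient (your $J=T^3M/V^2$ on $\{V<0\}$ is, up to constants, the square of the paper's $\|\nabla v\|_{L^2}^3\|v\|_{L^2}/(-2V(v))$), read off the Euler--Lagrange equation of the minimizer, rescale to produce \eqref{eq:stand_wave} with $\omega>0$, and obtain (1), (3), (4), (5) from $T(|u|)\le T(u)$, the Pohozaev identities, elliptic bootstrap, and comparison with the kernel of $-\frac12\Delta+\omega$. Two sub-steps are genuinely different. For compactness the paper does not invoke the full Lions machinery: it normalizes $\|v_n\|_{L^2}=\|\nabla v_n\|_{L^2}=1$, rules out triviality of the weak limit via Lieb's lemma \cite{L} following \cite{PSSW}, and then runs a Brezis--Lieb decomposition of both the $L^4$ norm and the dipolar term, concluding from the elementary inequality $(1-b_1^2)^{1/2}(1-b_2^2)^{3/2}+b_1b_2^3\le 1$ (equality only at the endpoints) that the weak limit carries full mass and kinetic energy. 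Your exclusion of dichotomy by strict subadditivity is the abstract form of exactly this computation, so the two routes are interchangeable; the paper's hands-on version has the advantage that the splitting of the sign-indefinite nonlocal term is carried out explicitly rather than subsumed into a subadditivity claim one would still have to verify. For the symmetry (2), the paper uses Steiner symmetrization (exploiting $K^\#=K$) to show the global minimizer itself can be taken symmetric, whereas your restriction to the invariant subspace plus Palais' symmetric criticality yields a symmetric critical point without comparing its level to the global infimum; both suffice for the theorem as stated. Your argument that \eqref{eq:necess} makes the constraint set nonempty (concentrating $\widehat{|u|^2}$ in a cone where $\widehat K$ is near its extreme value) is in fact more explicit than anything in the paper, which only proves necessity of \eqref{eq:necess}.

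One slip to correct: the Pohozaev identity from the multiplier $x\cdot\nabla u$ reads $T(u)+3V(u)+3\omega M(u)=0$ (every term produced by $x\cdot\nabla u$ carries the same sign, cf. \eqref{eq:101}), not $T(u)-3V(u)-3\omega M(u)=0$. With your signs, eliminating $\omega M(u)$ against $T+2V+\omega M=0$ would give $E(u)=-\frac15T(u)<0$ rather than the claimed $E(u)=\frac13T(u)>0$. With the corrected identity one gets $V=-\frac23T$ and hence $E=T+V=\frac13T>0$, as in Lemma \ref{lemma:pohoz}. The homogeneity argument you use to assign the dipolar term the same Pohozaev weight as the cubic term is correct; only the sign needs fixing.
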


The assumptions on $\lambda_1, \lambda_2 \in \R$ can be interpreted as the necessity of a sufficiently strong dipolar nonlinearity. Note that the existence of steady states 
is guaranteed \emph{even in situations where}  $\lambda_1 > 0$, i.e. in the case of a repulsive (defocussing) cubic nonlinearity. The appearance of steady states in this regime is 
therefore \emph{solely} due to the presence of the dipolar interaction and can not be reproduced in a conventional BEC. 
Moreover, by invoking the Galileian-symmetries of \eqref{eq:dip_BEC}, the existence of steady states directly implies the 
existence of \emph{dipolar solitons} in the form
\begin{equation}\label{soliton}
\psi(t,x) = u(x + \kappa t) e^{ i  \omega t} e^{-i \kappa(x+\kappa t)/2 }, \quad \kappa \in \R.
\end{equation}
where $u$ is the smooth, exponentially decaying (as $|x|\to \infty$) non-negative profile guaranteed by Theorem \ref{thm:main}. In the usual language of solitary waves, such a solution 
to \eqref{eq:dip_BEC} is considered to be a \emph{bright soliton} \cite{AG}. Our work therefore provides a rigorous mathematical basis for the existence of dipolar solitons, as studied in \cite{GMS, NPS, PeSa} 
(see also \cite{LMSLP} for a broader discussion and \cite{BAM} for a closely related physical system).

The main drawback of Theorem \ref{thm:main} is that it leaves uniqueness as an open question. In the usual case of a single cubic nonlinearity, uniqueness of (positive) solutions is strongly 
interwoven with the fact that $u=u(|x|)$ is found to be radially symmetric \cite{GMN} (the proof is based on symmetric re-arrangement techniques). 
This \emph{radial symmetry is broken} in our case due to the influence of the dipolar nonlinearity and hence 
we can not conclude uniqueness.

\begin{remark}
In physical experiments one needs to confine the gas by electromagnetic traps in order to achieve sufficiently low temperature. In order to mathematically describe the trapping potential, the 
Gross-Pitaevskii equation usually carries an additional term, namely 
\begin{equation*}
i\d_t\psi=-\frac{1}{2}\Delta\psi+V_{\rm ext} \psi+ \lambda_1|\psi|^2\psi+\lambda_2(K\ast|\psi|^2)\psi,
\end{equation*}
where the $V_{\rm ext}$ is assumed to be of the following form 
$V_{\rm ext}(x)=  \frac{|x|^2}{2} ,$
i.e. a harmonic oscillator confinement. Obviously, the presence of $V_{\rm ext}$ accounts for the existence of steady states even in the linear case $\lambda_1=\lambda_2=0$ (where the Hamiltonian admits 
countable many eigenvalues). 
The situations with trapping potential therefore has to be clearly distinguished from the one considered in Theorem \ref{thm:main} above. In particular, if one assumes the presence of a confinement and in 
addition, say, $\lambda_1\ge \frac{4\pi}{3}\lambda_2 \ge 0$, 
one can easily obtain the existence of stationary states in trapped dipolar BEC by following the arguments given in e.g. \cite{LSY} (see also \cite{BCMW}). That is, 
by minimizing the corresponding energy functional $E(u)$, which is now obviously bounded from below since all terms within $E(u)$ are positive (see also Remark \ref{rem_formula} below). 
Clearly, the presence of $V_{\rm ext}$ breaks the Galileian-symmetry of the model and thus solitary waves of the same kind as given by \eqref{soliton}, 
can only be observed in an actual physical experiment, when the trapping potential is turned off and the BEC is allowed to 
evolve only under the influence of nonlinear effects.
\end{remark}
\begin{remark}
Under Assumption \eqref{eq:necess} on $\lambda_1, \lambda_2$, finite time blow-up (and hence collapse of the condensate \cite{Lu}) can occur for the time-dependent equation \eqref{eq:dip_BEC}. 
Indeed by a straightforward calculation (see also \cite{CMS, Lu}) one obtains the \emph{virial identity}
\begin{align*}
\frac{d^2}{dt^2}I(t)=&\, \int_{\R^3} |\nabla\psi|^2+\frac{3\lambda_1}{2}|\psi|^4
+\frac{3\lambda_2}{2}(K\ast|\psi|^2)|\psi|^2dx\\
=& \, 2E(t)+\frac{1}{2}\int_{\R^3} \lambda_1|\psi|^4+\lambda_2(K\ast|\psi|^2)|\psi|^2dx,
\end{align*}
where
\begin{equation*}
I(t):=\int_{\R^3} \frac{|x|^2}{2}|\psi(t, x)|^2dx
\end{equation*}
Invoking the classical argument of Glassey \cite{G} yields blow-up of solutions to \eqref{eq:dip_BEC} in finite time, provided \eqref{eq:necess} holds true and the \emph{initial energy is negative} (see 
\cite{CMS} for a possible construction of such initial data). 
Note, however, that the solitary wave solutions constructed above correspond to initial data with positive energy (see assertion (3) of Theorem \ref{thm:main}). They do not blow-up in finite time but most probably 
are unstable w.r.t. small perturbations (an issue that can be overcome in experiments by creating effective lower dimensional systems, cf. \cite{BCRL}).
\end{remark}

The paper is now organized as follows: In Section \ref{sec:cond} below we shall as a first step derive necessary conditions for the existence of standing waves. That these waves in 
fact exist is then proved in Section \ref{sec:existence} and we conclude the work by giving the remaining details for the proof of our main theorem in Section \ref{sec:proof}.

\section{Necessary conditions for existence}\label{sec:cond}

In this section we shall prove several conditions which are necessary for the existence of solutions to \eqref{eq:dip_BEC}. Note that without loss of generality, we can impose $n=(0, 0, 1)$. 
In this case $K(x)$ becomes 
\begin{equation}\label{kernel}
K(x)=\frac{x_1^2+x_2^2-2x_3^2}{|x|^5},
\end{equation}
and we shall assume $K$ to be of this form from now on. In the following we also denote the Fourier transform of a function $f(x)$ by
$$
\widehat f(\xi) := \int_{\R^3} f(x) e^{-i \xi \cdot x} d\xi.
$$
We can then recall Lemma 2.1 of \cite{CMS} 
concerning basic properties of the nonlocal potential $K\ast|u|^2$. 

\begin{lemma}\label{lemma:1}
The operator $\K: f\mapsto K\ast f$ can be extended as a continuous operator on $L^p(\R^3)$, for each $1<p<\infty$. Moreover, the Fourier transform of $K$ is given by
\begin{equation}\label{eq:four_trans}
\widehat K(\xi)=\frac{4\pi}{3}(3\cos^2\Theta-1)=\frac{4\pi}{3}\pare{\frac{2\xi_1^2-\xi_2^2-\xi_3^2}{|\xi|^2}},
\end{equation}
where $\Theta= \Theta(\xi)$ denotes the angle between $\xi \in \R^3$ and the dipole axis $n=(0, 0, 1)$.
\end{lemma}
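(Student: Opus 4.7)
The plan is to compute $\widehat K$ explicitly in the sense of tempered distributions and then to read off the $L^p$ boundedness from the fact that the resulting symbol is bounded and homogeneous of degree zero. As a preliminary observation, $K(x)$ is not locally integrable, but its spherical average vanishes,
$$\int_{S^2}(1-3\cos^2\theta)\,d\sigma=4\pi-3\cdot\frac{4\pi}{3}=0,$$
so $K$ defines a principal-value tempered distribution and is a classical Calder\'on--Zygmund kernel of order $-3$.

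To obtain the explicit form of $\widehat K$, I would exploit the distributional identity
$$K=-\partial_{x_3}^2\frac{1}{|x|}-\frac{4\pi}{3}\delta_0\qquad\text{in }\mathcal S'(\R^3).$$
Pointwise for $x\neq 0$, a direct computation gives $\partial_{x_3}^2|x|^{-1}=(2x_3^2-x_1^2-x_2^2)/|x|^5=-K(x)$; the $-\frac{4\pi}{3}\delta_0$ correction at the origin is the diagonal piece of the standard formula for the distributional Hessian of the Newtonian potential in $\R^3$, which is consistent with the familiar identity $\Delta|x|^{-1}=-4\pi\delta_0$ obtained by tracing $\partial_j\partial_k|x|^{-1}$. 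Taking the Fourier transform and using $\widehat{|x|^{-1}}(\xi)=4\pi/|\xi|^2$ then yields
$$\widehat K(\xi)=\xi_3^2\cdot\frac{4\pi}{|\xi|^2}-\frac{4\pi}{3}=\frac{4\pi}{3}\cdot\frac{3\xi_3^2-|\xi|^2}{|\xi|^2}=\frac{4\pi}{3}\pare{3\cos^2\Theta-1},$$
with $\cos\Theta=\xi_3/|\xi|$, as claimed.

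For the $L^p$-continuity with $1<p<\infty$, the symbol $\widehat K$ is bounded, smooth away from the origin, and homogeneous of degree zero, hence satisfies the Mikhlin--H\"ormander multiplier condition, so $\K$ extends continuously on $L^p(\R^3)$. Equivalently, one may recognise
$$\K=-4\pi\pare{R_3^2-I/3},$$
where $R_3$ denotes the third Riesz transform, and invoke the classical $L^p$-boundedness of Riesz transforms. The main obstacle I expect is the distributional book-keeping in the second step: pointwise $K$ coincides with $-\partial_{x_3}^2|x|^{-1}$, but the $\delta_0$ correction at the origin is easy to miss, and without it the constant term in $\widehat K$ would be wrong and the zero-mean property of $\widehat K$ on the unit sphere (which is equivalent to $K$ being a bona fide Calder\'on--Zygmund kernel) would fail.
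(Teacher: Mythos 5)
Your argument is correct, and it supplies a proof where the paper gives none: Lemma \ref{lemma:1} is simply \emph{recalled} from Lemma 2.1 of \cite{CMS}, so there is no in-paper proof to compare against. Your route --- writing $\mathrm{p.v.}\,K=-\partial_{x_3}^2|x|^{-1}-\tfrac{4\pi}{3}\delta_0$ in $\mathcal S'(\R^3)$ (the off-origin identity $\partial_{x_3}^2|x|^{-1}=-K$ plus the diagonal delta contribution whose trace reproduces $\Delta|x|^{-1}=-4\pi\delta_0$), taking Fourier transforms with $\widehat{|x|^{-1}}=4\pi/|\xi|^2$, and then invoking the Mikhlin--H\"ormander theorem for the resulting bounded, smooth, degree-zero homogeneous symbol --- is the standard one and yields exactly $\widehat K(\xi)=\frac{4\pi}{3}(3\cos^2\Theta-1)$ with $\cos\Theta=\xi_3/|\xi|$; an equivalent classical route is the formula for the Fourier transform of $\mathrm{p.v.}\,P(x)/|x|^{n+k}$ with $P$ harmonic and homogeneous of degree $k$ (here $P(x)=|x|^2-3x_3^2$, $n=3$, $k=2$, and the constant is $-\frac{4\pi}{3}$ in the paper's normalization), which gives the same answer. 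Two small points. First, your coordinate expression $\frac{4\pi}{3}(2\xi_3^2-\xi_1^2-\xi_2^2)/|\xi|^2$ is the correct one for the dipole axis $n=(0,0,1)$; the right-hand side of \eqref{eq:four_trans} as printed, $\frac{4\pi}{3}(2\xi_1^2-\xi_2^2-\xi_3^2)/|\xi|^2$, permutes the indices and is inconsistent with the paper's own definition of $\Theta$, so your version is the one to trust (only the resulting bounds $\widehat K\in[-\frac{4\pi}{3},\frac{8\pi}{3}]$ are used later, and those are unaffected). Second, the Riesz-transform identity in your aside has a sign slip: with the usual convention $\widehat{R_jf}=-i(\xi_j/|\xi|)\widehat f$, the operator $R_3^2$ has symbol $-\xi_3^2/|\xi|^2$, so $\K=-4\pi\pare{R_3^2+\tfrac{1}{3}I}$ rather than $-4\pi\pare{R_3^2-\tfrac{1}{3}I}$; this is harmless, since your main argument rests on the multiplier theorem and not on this identification.
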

Formula \eqref{eq:four_trans} implies that $\widehat K\in L^\infty(\R^3)$ and thus $K\ast f $ clearly defines a continuous operator $\mathcal K: L^2(\R^3)\to L^2(\R^3)$. In the 
following lemma we shall derive two Pohozaev-type identities, which have to be a-priori satisfied by any solution to \eqref{eq:stand_wave}. 
\begin{lemma}\label{lemma:pohoz}
Let $u\in H^1(\R^3)$ be a solution to \eqref{eq:stand_wave}. Then the following identities hold
\begin{align}
& \int_{\R^3}\frac{1}{2}|\nabla u|^2dx=3\omega\int_{\R^3}|u|^2dx\label{eq:pohoz1}\\
& \int_{\R^3}\frac{\lambda_1}{2}|u|^4+\frac{\lambda_2}{2}(K\ast|u|^2)|u|^2dx=-2\omega\int_{\R^3}|u|^2dx\label{eq:pohoz2}.
\end{align}
Moreover we also have that $V(u)=-\frac{2}{3}T(u)=-2E(u)$.
\end{lemma}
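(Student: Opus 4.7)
The plan is to derive two linear relations among the four quantities $T(u)$, $\int |u|^4$, $\int (K\ast|u|^2)|u|^2\, dx$ and $\|u\|_{L^2}^2$, and then solve the resulting $2\times 2$ system. One equation comes from pairing \eqref{eq:stand_wave} against $\bar u$ (equivalently, amplitude-scaling the action); the other is the genuine Pohozaev identity coming from a spatial dilation.

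First I would test the equation against $\bar u$. After integrating by parts the Laplacian term (which is legitimate since $u\in H^1$ and the equation forces $\Delta u\in L^2$, as $|u|^2u\in L^2$ by the Sobolev embedding $H^1(\R^3)\hookrightarrow L^6(\R^3)$, and $(K\ast|u|^2)u\in L^2$ by Lemma \ref{lemma:1} together with Hölder), the real part yields
\begin{equation*}
T(u)+2V(u)+\omega\|u\|_{L^2}^2=0.
\end{equation*}

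For the second relation I would perform the dilation $u_\mu(x):=u(\mu x)$ and compute
\begin{equation*}
\Phi(\mu):=T(u_\mu)+V(u_\mu)+\omega\|u_\mu\|_{L^2}^2
\end{equation*}
at $\mu=1$. Standard scaling gives $T(u_\mu)=\mu^{-1}T(u)$ and $\|u_\mu\|_{L^2}^2=\mu^{-3}\|u\|_{L^2}^2$ and $\int|u_\mu|^4\,dx=\mu^{-3}\int|u|^4\,dx$. The key step is the dipolar term: since the kernel $K$ in \eqref{kernel} is homogeneous of degree $-3$, a change of variables shows $(K\ast|u_\mu|^2)(x)=(K\ast|u|^2)(\mu x)$, so that $\int(K\ast|u_\mu|^2)|u_\mu|^2\,dx=\mu^{-3}\int(K\ast|u|^2)|u|^2\,dx$, i.e.\ $V(u_\mu)=\mu^{-3}V(u)$. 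Because $u$ solves \eqref{eq:stand_wave}, the map $\mu\mapsto\Phi(\mu)$ is stationary at $\mu=1$ (this is the Pohozaev/variational identity, which can be verified rigorously by multiplying \eqref{eq:stand_wave} by $x\cdot\nabla\bar u$, integrating over $B_R$, and sending $R\to\infty$ after the standard cut-off argument). Differentiating $\Phi$ at $\mu=1$ yields
\begin{equation*}
-T(u)-3V(u)-3\omega\|u\|_{L^2}^2=0.
\end{equation*}

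Solving the two linear relations gives $V(u)=-2\omega\|u\|_{L^2}^2$, which is exactly \eqref{eq:pohoz2}, and then $T(u)=3\omega\|u\|_{L^2}^2$, which is \eqref{eq:pohoz1}. The claimed chain $V(u)=-\tfrac{2}{3}T(u)=-2E(u)$ is then immediate: the first equality follows from substituting $\omega\|u\|_{L^2}^2=\tfrac{1}{3}T(u)$ into $V(u)=-2\omega\|u\|_{L^2}^2$, and the second from $E(u)=T(u)+V(u)=T(u)-\tfrac{2}{3}T(u)=\tfrac{1}{3}T(u)$, so $-2E(u)=-\tfrac{2}{3}T(u)=V(u)$.

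The only step requiring genuine care is justifying the Pohozaev identity from the second scaling at the $H^1$ level. Integration against $x\cdot\nabla\bar u$ is not a priori defined, so the natural route is to approximate with a smooth cut-off $\chi_R(x)\,x\cdot\nabla\bar u$ (or equivalently to exploit $\Delta u\in L^2$ to lift $u$ into $H^2_{\rm loc}$ and use decay at infinity once proved). The nonlocal contribution is the main subtle point and is handled cleanest through the scaling computation above, which relies only on the homogeneity of $K$ and bypasses the need to manipulate $(K\ast|u|^2)u \cdot(x\cdot\nabla\bar u)$ directly.
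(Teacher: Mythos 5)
Your proposal is correct and follows essentially the same route as the paper: one relation from pairing the equation with $\bar u$, and the dilation/Pohozaev relation $T(u)+3V(u)+3\omega\|u\|_{L^2}^2=0$, which is exactly what the paper obtains by multiplying by $x\cdot\nabla u$ and integrating by parts; solving the $2\times2$ system then gives \eqref{eq:pohoz1}--\eqref{eq:pohoz2}. Your explicit verification that the nonlocal term scales like the $L^4$ term (via the degree $-3$ homogeneity of $K$) is a detail the paper leaves implicit, but the argument is the same.
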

\begin{proof}
We first multiply equation \eqref{eq:stand_wave} by $x\cdot\nabla u$ and then integrate by parts. Straightforward calculations yield
\begin{equation}\label{eq:101}
0=\int_{\R^3}-\frac{1}{4}|\nabla u|^2-\frac{3}{4}\lambda_1|u|^4-\frac{3}{4}\lambda_2(K\ast|u|^2)|u|^2-\frac{3}{2}\omega|u|^2dx.
\end{equation}
On the other hand, multiplying equation \eqref{eq:stand_wave} by $\bar u$, we obtain
\begin{equation}\label{eq:102}
0=\int_{\R^3}\frac{1}{2}|\nabla u|^2+\lambda_1|u|^4+\lambda_2(K\ast|u|^2)|u|^2+\omega|u|^2dx.
\end{equation}
By combining the identities \eqref{eq:101} and \eqref{eq:102}, we infer
\begin{equation*}
0=\int_{\R^3}-\frac{\lambda_1}{2}|u|^4-\frac{\lambda_2}{2}(K\ast|u|^2)|u|^2-2\omega|u|^2dx,
\end{equation*}
which is nothing but \eqref{eq:pohoz2}. In view of \eqref{eq:102}, this also yields \eqref{eq:pohoz1}. Finally, the definitions of the kinetic and potential energy given in Section \ref{intro}, 
together with \eqref{eq:pohoz1} and \eqref{eq:pohoz2} directly imply 
$V(u)=-\frac{2}{3}T(u)=-2E(u).$
\end{proof}

From the Pohozaev-type identities \eqref{eq:pohoz1}, \eqref{eq:pohoz2} we can derive the following necessary conditions for the existence of solutions to \eqref{eq:stand_wave}. 
First of all, \eqref{eq:pohoz1} obviously requires $\omega>0$. Consequently, 
\eqref{eq:pohoz2} implies
\begin{equation}\label{eq:103}
\int_{\R^3}\frac{\lambda_1}{2}|u|^4+\frac{\lambda_2}{2}(K\ast|u|^2)|u|^2dx<0.
\end{equation}
Denoting by $\widehat\rho(\xi)= \widehat {|u|^2}(\xi)$ the Fourier transform of $|u|^2$, Plancherel's theorem allows us to rewrite the left hand side of this inequality in the following form 
\begin{align*}
\int_{\R^3} \lambda_1|u|^4+\lambda_2(K\ast|u|^2)|u|^2dx=  \ \int_{\R^3} \lambda_1\widehat\rho^2(\xi) +\widehat K(\xi) \widehat\rho^2(\xi) d\xi .
\end{align*}
In view of formula \eqref{eq:four_trans} we note that in fact $\widehat K(\xi) \in [-\frac{4\pi} { 3}, \frac{8\pi}{ 3}]$. In the case of $\lambda_2 >0$, this implies
\begin{equation*}
\int_{\R^3} \lambda_1\widehat\rho^2+\lambda_2\widehat K(\xi) \widehat\rho^2(\xi) d\xi \ge\int_{\R^3} \pare{\lambda_1-\frac{4\pi}{3}\lambda_2}\widehat\rho^2(\xi) d\xi,
\end{equation*}
which consequently requires
$
\lambda_1<\frac{4\pi}{3}\lambda_2,
$
as a necessary condition for the existence of solutions, cf. the first line of \eqref{eq:necess}. Similarly, in the case when $\lambda_2 < 0$, we obtain 
\begin{equation*}
\int_{\R^3} \lambda_1\widehat\rho^2+\lambda_2\widehat K(\xi) \widehat\rho^2(\xi) d\xi \ge\int_{\R^3} \pare{\lambda_1+ \frac{8\pi}{3}\lambda_2}\widehat\rho^2(\xi) d\xi,
\end{equation*}
which yields  the second condition given in \eqref{eq:necess}, i.e.
$
\lambda_1<- \frac{8\pi}{3}\lambda_2.
$
\begin{remark} \label{rem_formula} The conditions on $\lambda_1, \lambda_2$ can also be understood as follows: Straightforward calculations show (cf. \cite{LMSLP}) that, for $K(x)$ given by \eqref{kernel}, the 
dipolar nonlinearity can be rewritten as
$$
K\ast |u |^2 =  - \frac{4 \pi}{3} | u |^2 - \frac{\partial^2 }{\partial x_3^2}\Phi  ,
$$
where $\Phi$ solves the Poisson equation $- \Delta \Phi =  | u |^2$. The total nonlinear potential energy \eqref{en_pot} is therefore given by 
\begin{equation}\label{V}
V(u)= \frac{1}{2}  \int_{\R^3} (\lambda_1-  \frac{4 \pi}{3} \lambda_2) |u|^4 dx +\frac{\lambda_2}{2}\int_{\R^3} |\partial_{x_3} \nabla \Phi |^2 dx,
\end{equation}
i.e. a term which stems from a (combined) cubic nonlinearity and a sub-critical term, which stems from Poisson's equation. The assumptions on $\lambda_1, \lambda_2$ consequently 
ensure that the total potential energy is essentially the same as in the case of an attractive cubic nonlinearity. 
Note that \eqref{V} implies that in the \emph{defocusing situation}, i.e. $\lambda_1 \ge \frac{4\pi}{3} \lambda_2 \ge 0$, the nonlinear potential energy is indeed convex (see also \cite{BCMW}).
\end{remark}

\section{A variational formulation}\label{sec:existence}

We shall now formulate a variational problem, which will be used to ensure the existence of solutions to \eqref{eq:stand_wave}. 
To this end, we introduce 
\begin{equation}\label{eq:funct}
J(v):=
\frac{\|\nabla v \|_{L^2}^3\|v \|_{L^2}}{-\lambda_1\|v \|_{L^4}^4-\lambda_2\scalar{\K(|v |^2)}{|v |^2}},
\end{equation}
where $\langle \cdot, \cdot \rangle $ denotes the scalar product in $L^2(\R^3)$.
This functional is well-defined for each $v \in H^1(\R^3)$ in view of Lemma \ref{lemma:1}. 
Moreover, $J(v)$ satisfies the following scaling properties: Let
\begin{equation}\label{eq:scal}
v_{q, s}(x):=q v (sx), \quad q,s>0,
\end{equation}
which implies
\begin{equation*}
\| v_{q, s}\|_{L^2}^2=q^2s^{-3}\|v \|_{L^2},\;\|\nabla v_{q, s}\|_{L^2}^2=q^2s^{-1}\|\nabla v\|_{L^2}, \;
\| v_{q, s}\|_{L^4}^4=q^4s^{-3}\|v \|_{L^4}^4.
\end{equation*}
Then $J(v)$ is found to be invariant under this scaling above, i.e. $J(v_{s, q})=J(v)$. To this end, one checks that the nonlocal term $\int(K\ast|v|^2)|v|^2dx$ scales in the same way as 
the $L^4(\R^3)$ norm, since
\begin{align*}
\int_{\R^3} (K\ast| v_{q, s}|^2)|v_{q, s}|^2\, dx= & \ q^4s^{-6}\int_{\R^3} \widehat K(\xi)\widehat\rho^2\left(\frac{\xi}{s} \right)d\xi\\
= & \ q^4s^{-3}\int_{\R^3} \widehat K(s\xi)\widehat\rho^2(\xi)\, d\xi=q^4s^{-3}\int_{\R^3} (K\ast|v|^2)|v|^2\, dx,
\end{align*}
where $\widehat\rho$ denotes the Fourier transform of $\rho:=|v|^2$. Note that here we have used the fact that $\widehat K(s\xi)=\widehat K(\xi)$, for each 
$s>0$, which is easily seen from \eqref{eq:four_trans}.
\begin{lemma}\label{lemma:4}
Let $v\in H^1(\R^3)$ be a critical point of the functional $J$, with $J(v)= \alpha \in \R$. Then $v$ is a weak solution to
\begin{equation}
- \beta \Delta v + 4\alpha(\lambda_1|v|^2v+\lambda_2\K(|v|^2)v)+ \widetilde \omega v=0,
\end{equation}
where $\widetilde \omega = \|\nabla v\|_{L^2}^3\|v\|_{L^2}^{-1} $ and $\beta = 3 \|v\|_{L^2} \|\nabla v\|_{L^2}$.
\end{lemma}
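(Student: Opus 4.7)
The approach is a direct computation of the Euler--Lagrange equation for the quotient functional $J = N/D$, with numerator $N(v) := \|\nabla v\|_{L^2}^3\|v\|_{L^2}$ and denominator $D(v) := -\lambda_1\|v\|_{L^4}^4 - \lambda_2\scalar{\K(|v|^2)}{|v|^2}$. Since $\alpha = N(v)/D(v)$ at the critical point, the condition $J'(v) = 0$ is equivalent (for $D(v)\neq 0$, which is automatic whenever $J(v)$ is finite) to
\[
N'(v)[\varphi] = \alpha\, D'(v)[\varphi] \qquad \text{for all } \varphi \in H^1(\R^3).
\]

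First I would compute $N'(v)[\varphi]$ by applying the chain rule to the square roots in $\|\nabla v\|_{L^2}$ and $\|v\|_{L^2}$ and integrating by parts on the Laplacian term. This produces
\[
N'(v)[\varphi] = -3\|\nabla v\|_{L^2}\|v\|_{L^2}\,\re\!\int_{\R^3}\!\Delta v\, \bar\varphi\, dx + \frac{\|\nabla v\|_{L^2}^3}{\|v\|_{L^2}}\,\re\!\int_{\R^3}\! v\, \bar\varphi\, dx,
\]
which, by the definitions of $\beta$ and $\widetilde\omega$ from the statement, is exactly $-\beta\,\re\!\int \Delta v\,\bar\varphi\,dx + \widetilde\omega\,\re\!\int v\,\bar\varphi\,dx$.

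Next I would compute $D'(v)[\varphi]$. The pointwise derivative of $|v + t\varphi|^2$ at $t=0$ is $2\re(\bar v\varphi)$, so the cubic term contributes $-4\lambda_1\re\!\int|v|^2 v\,\bar\varphi\,dx$. For the nonlocal contribution I use that the kernel $K$ defined in \eqref{kernel} is real and even, so $\K$ is self-adjoint on $L^2(\R^3)$; the two symmetric variations of the quadratic form $\scalar{\K(|v|^2)}{|v|^2}$ then coincide and produce the factor four in
\[
D'(v)[\varphi] = -4\,\re\!\int_{\R^3}\!\bigl(\lambda_1|v|^2 v + \lambda_2\K(|v|^2)v\bigr)\bar\varphi\, dx.
\]

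Equating $N'(v)[\varphi] = \alpha\, D'(v)[\varphi]$ and moving everything to one side, I obtain $\re\!\int_{\R^3}\mathcal{L}[v]\,\bar\varphi\,dx = 0$ for every complex-valued $\varphi \in H^1(\R^3)$, where $\mathcal{L}[v] := -\beta\Delta v + 4\alpha(\lambda_1|v|^2 v + \lambda_2\K(|v|^2)v) + \widetilde\omega v$. Testing separately against $\varphi$ and $i\varphi$ then removes the $\re$ and yields $\mathcal{L}[v]=0$ in the distributional sense, which is precisely the desired weak equation. The only mildly delicate point is keeping track of the real parts for complex-valued $v$ together with the self-adjointness of $\K$ that produces the coefficient $4\alpha$ (rather than $2\alpha$) in front of the nonlinear term; there is no genuine analytical obstacle.
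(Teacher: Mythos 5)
Your proposal is correct and follows essentially the same route as the paper: writing $J=N/D$, reducing criticality to $N'(v)=\alpha D'(v)$, and computing the variation of the nonlocal term via the self-adjointness of $\K$ (which the paper establishes through Plancherel and the boundedness of $\widehat K$), yielding the factor $4\alpha$. Your explicit handling of the real parts by testing against $\varphi$ and $i\varphi$ is a small point the paper leaves implicit, but it is not a different argument.
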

\begin{proof}
Denote $\beta_1 = \| v\|_{L^2}$ and $\beta_2 = \|\nabla v\|_{L^2}$. All we need to do is to find the points $v\in H^1(\R^3)$, where the first variation of $J$ satisfies $\delta J(v)=0$. 
To this end, we calculate the variation of the nonlocal 
term $\scalar{\K(|v|^2)}{|v|^2}$ in the form 
$\scalar{2v\K'(|v|^2)}{\eta}$, $\forall \, \eta\in C_0^\infty(\R^3)$, where $\K'(f)$ denotes the Fr\'echet derivative of the operator 
$\K(f)$. In order to compute $\K'(f)$ we use Plancherel's theorem, to write
\begin{equation*}
\scalar{\K(f+\eta)}{f+\eta}-\scalar{\K(f)}{f}=\int_{\R^3} \widehat K\pare{2\re(f\conj{\eta})+|\eta|^2}dx. 
\end{equation*}
Using \eqref{eq:four_trans}, we obtain
\begin{equation*}
\modu{\scalar{\K(f+\eta)}{f-\eta}-\scalar{\K(f)}{f}-\scalar{2\K(f)}{\eta}}\le\frac{8\pi}{3}\|\eta\|_{L^2}^2
\end{equation*}
and we consequently infer that the Fr\'echet derivative of $K(f)$ is given by $\scalar{\K'(f)}{\eta}=\scalar{2\K(f)}{\eta}$. We therefore find that $\delta J(v)$ is given by
\begin{align*}
\delta J(v)=& \ -\frac{3\|v\|_{L^2}\|\nabla v\|_{L^2}\scalar{-\Delta v}{\eta}
+\|\nabla v\|_{L^2}^3\|v\|_{L^2}^{-1}\scalar{v}{\eta}}{(\lambda_1\|v\|_{L^4}^4+\lambda_2\scalar{\K(|v|^2)}{|v|^2})}\\
&+\frac{\|\nabla v\|_{L^2}^3\|v\|_{L^2}\pare{4\lambda_1\scalar{|v|^2v}{\eta}+4\lambda_2\scalar{\K(|v|^2)v}{\eta}}}
{\pare{\lambda_1\|v\|_{L^4}^4+\lambda_2\scalar{\K(|v|^2)}{|v|^2}}^2}\\
=& \ \frac{1}{(\lambda_1\|v\|_{L^4}^4+\lambda_2\scalar{\K(|v|^2)}{|v|^2})}
\Big(3\beta_1\beta_2\scalar{\Delta v}{\eta}-\beta_1^{-1}\beta_2^3\scalar{v}{\eta}\\
& \ -4\alpha\pare{\lambda_1\scalar{v}{\eta}+\lambda_2\scalar{\K(|v|^2v)}{\eta}}\Big).
\end{align*}
Since this has to hold for any $\eta\in C_0^\infty(\R^3)$, we conclude
\begin{equation*}
3\beta_1\beta_2\Delta v-\beta_1^{-1}\beta_2^3v-4\alpha\pare{\lambda_1|v|^2v+\lambda_2\K(|v|^2v)}=0,
\end{equation*}
which proves the assertion of the lemma.
\end{proof}

Consequently the problem of existence of solutions to \eqref{eq:stand_wave} reduces to the task of finding critical points, say minima, of the functional $J(v)$.
\begin{proposition}\label{prop:exist} Let $\lambda_1, \lambda_2$ satisfy assumption \eqref{eq:necess}. 
Then there exists a minimizer $v_*\in H^1(\R^3)$ for the functional defined in \eqref{eq:funct}, i.e.
\begin{equation}
J(v_*)=j:=\inf_{0\neq v\in H^1(\R^3)}J(v).
\end{equation}
\end{proposition}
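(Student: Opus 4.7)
The plan is to apply the direct method of the calculus of variations to the functional $J$, using Lions' concentration-compactness principle to overcome the loss of compactness coming from the translation-invariance of $J$ on $\R^3$. First I would establish that $0 < j < \infty$. The bound $j < \infty$ amounts to exhibiting some $v \in H^1(\R^3)$ for which the denominator is strictly positive. By Plancherel and \eqref{eq:four_trans}, the denominator equals $-\int_{\R^3} (\lambda_1 + \lambda_2 \widehat K(\xi))\, \widehat\rho(\xi)^2 \, d\xi$ with $\rho = |v|^2$, and the range $\widehat K(\xi) \in [-\tfrac{4\pi}{3}, \tfrac{8\pi}{3}]$ together with \eqref{eq:necess} shows that by choosing an appropriately anisotropic test function (concentrated along $\xi_3$ when $\lambda_2 > 0$, in the $\xi_1\xi_2$-plane when $\lambda_2 < 0$) the integrand is negative on a set of positive measure. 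The bound $j > 0$ follows from the Gagliardo-Nirenberg inequality $\|v\|_{L^4}^4 \lesssim \|\nabla v\|_{L^2}^3 \|v\|_{L^2}$ in three dimensions, together with $|\langle \mathcal{K}(|v|^2), |v|^2 \rangle| \leq \|\widehat K\|_{L^\infty} \|v\|_{L^4}^4$ from Plancherel and Lemma \ref{lemma:1}.

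Next, let $(v_n) \subset H^1(\R^3)$ be a minimizing sequence. Exploiting the two-parameter scaling \eqref{eq:scal}, which acts independently on $\|v\|_{L^2}$ and $\|\nabla v\|_{L^2}$, I normalize so that $\|v_n\|_{L^2} = \|\nabla v_n\|_{L^2} = 1$; the sequence is then bounded in $H^1$ and the denominator $D(v_n) := -\lambda_1 \|v_n\|_{L^4}^4 - \lambda_2 \langle \mathcal{K}(|v_n|^2), |v_n|^2 \rangle$ satisfies $D(v_n) \to 1/j > 0$. I then apply Lions' first concentration-compactness principle to the probability densities $\rho_n = |v_n|^2$. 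Vanishing is ruled out because it would imply $v_n \to 0$ in $L^4(\R^3)$, hence $D(v_n) \to 0$, contradicting $D(v_n) \to 1/j$. Dichotomy is ruled out by a strict subadditivity argument: decomposing $v_n = v_n^{(1)} + v_n^{(2)} + o_{H^1}(1)$ with supports separating to infinity and writing $a_i = \|\nabla v_n^{(i)}\|_{L^2}$, $b_i = \|v_n^{(i)}\|_{L^2}$ (so $a_1^2 + a_2^2 \leq 1 + o(1)$ and $b_1^2 + b_2^2 = 1 + o(1)$), the Brezis-Lieb lemma decouples $\|v_n\|_{L^4}^4$, while the decay $|K(x)| \lesssim |x|^{-3}$ forces the cross-term $\langle \mathcal{K}(|v_n^{(1)}|^2), |v_n^{(2)}|^2\rangle$ to be $o(1)$, so $D(v_n) = D(v_n^{(1)}) + D(v_n^{(2)}) + o(1)$. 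Applying $J(v_n^{(i)}) \geq j$ on each piece gives $D(v_n^{(i)}) \leq a_i^3 b_i / j$, and by H\"older's inequality $a_1^3 b_1 + a_2^3 b_2 \leq (a_1^4 + a_2^4)^{3/4}(b_1^4 + b_2^4)^{1/4} \leq 1$, with strict inequality whenever both pieces are non-trivial, producing the desired contradiction. Hence the tightness alternative holds, possibly after translating each $v_n$ (which leaves $J$ invariant).

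Once tightness is established, I extract a subsequence with $v_n \rightharpoonup v_*$ in $H^1(\R^3)$ and $v_n \to v_*$ strongly in $L^p(\R^3)$ for $2 \leq p < 6$; in particular $|v_n|^2 \to |v_*|^2$ in $L^2(\R^3)$. By Lemma \ref{lemma:1}, $\mathcal{K}$ is continuous on $L^2$, so $D(v_n) \to D(v_*) = 1/j$, which forces $v_* \neq 0$. Weak lower semicontinuity of both $\|\cdot\|_{L^2}$ and $\|\nabla \cdot\|_{L^2}$ yields $J(v_*) \leq 1/D(v_*) = j$, and since $j$ is the infimum we conclude $J(v_*) = j$.

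The main obstacle is the dichotomy step. Because the dipolar kernel $K$ decays only as $|x|^{-3}$, the decoupling of the nonlocal cross-term between well-separated pieces is not automatic and requires quantitative control; a typical approach is to split $K$ into near-field and far-field contributions and apply Hardy-Littlewood-Sobolev-type estimates on each. Moreover, the strict-subadditivity estimate above degenerates exactly when one of $a_i, b_i$ vanishes, but this edge case coincides with the compactness alternative rather than genuine mass splitting, so no loss of information occurs there.
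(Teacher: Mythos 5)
Your proof is correct in outline, but it takes a genuinely different route from the paper. You run the full Lions concentration--compactness trichotomy: rule out vanishing via the $L^4$ bound on the denominator, rule out dichotomy by strict subadditivity, and conclude tightness. The paper instead follows the Weinstein/Cipolatti/Papanicolaou--Sulem--Sulem--Wang scheme: after the same normalization $\|v_n\|_{L^2}=\|\nabla v_n\|_{L^2}=1$, it invokes Lieb's lemma (through \cite{PSSW}) only to guarantee that the weak limit $v_*$ is nontrivial, and then works directly with the remainder $w_n=v_n-v_*$, computing $\lim_n V(w_n)$ by the Brezis--Lieb lemma for the $L^4$ term and by explicit weak-convergence manipulations for the nonlocal term; the elementary inequality $(1-b_1^2)^{1/2}(1-b_2^2)^{3/2}+b_1b_2^3\le 1$ then forces $b_1=b_2=1$, i.e.\ strong convergence. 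That inequality is exactly the role played by your H\"older estimate $a_1^3b_1+a_2^3b_2\le(a_1^4+a_2^4)^{3/4}(b_1^4+b_2^4)^{1/4}\le 1$, so the strict-subadditivity mechanism is identical; what the paper's route buys is that one never performs the dichotomy surgery (cutting $v_n$ into pieces with separating supports and estimating the nonlocal cross term), while your route is more modular and makes the source of compactness more transparent.

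Two small corrections to your version. First, the decoupling of the nonlocal cross term is easier than you suggest: since $|K(z)|\le 2|z|^{-3}$, on supports separated by $R_n\to\infty$ one has
\begin{equation*}
\modu{\scalar{\K(|v_n^{(1)}|^2)}{|v_n^{(2)}|^2}}\le 2R_n^{-3}\,\|v_n^{(1)}\|_{L^2}^2\|v_n^{(2)}\|_{L^2}^2\longrightarrow 0,
\end{equation*}
with no Hardy--Littlewood--Sobolev splitting needed. Second, your test function for $j<\infty$ is oriented the wrong way: with $n=(0,0,1)$ and $\widehat K=\frac{4\pi}{3}(3\cos^2\Theta-1)$, the minimum $-\frac{4\pi}{3}$ is attained for $\xi$ \emph{perpendicular} to the dipole axis, so for $\lambda_2>0$ one should concentrate $\widehat\rho$ near the plane $\xi_3=0$ (a density elongated along the axis), and conversely for $\lambda_2<0$. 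Neither point affects the validity of your scheme. (Both your argument and the paper's implicitly restrict the infimum to the set where the denominator is positive, since under \eqref{eq:necess} the denominator is not sign-definite; this is a standard, easily repaired imprecision.)
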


\begin{proof} First we note that condition \eqref{eq:necess} ensures that $J(v)$ is non-negative. Hence, 
there exists a minimizing sequence  $\{v_n\}_{n\in \mathbb N}\subset H^1(\R^3)$, i.e.
\begin{equation*}
j=\lim_{n\to\infty}J(v_n).
\end{equation*}
Furthermore, we can rescale the sequence $\{v_n\}_{n\in \mathbb N}$, such that 
\begin{equation*}
\|v_n\|_{L^2}=1,\qquad\|\nabla v_n\|_{L^2}=1
\end{equation*}
and thus
\begin{equation}\label{eq:104}
j=\lim_{n\to\infty}J(v_n)=-\left( \lim_{n\to\infty}V(v_n)\right)^{-1}
\end{equation}
Since $\{v_n\}_{n\in \mathbb N}$ is uniformly bounded in $H^1(\R^3)$, we know that, up to extraction of a sub-sequence,  there exists a weak limit in $H^1(\R^3)$, i.e. 
\begin{equation*}
v_n\wlim v_* \quad\textrm{in}\;H^1(\R^3).
\end{equation*}
Moreover, by the lower semicontinuity of the $L^2$ norm, we have
\begin{equation*}
\|v_* \|_{L^2}\equiv  b_1\le1,\qquad\|\nabla v_* \|_{L^2}\equiv b_2\le1.
\end{equation*}
First of all, by arguing as in \cite{PSSW} we can show that both $b_1$ and $b_2$ are strictly positive: Indeed, from the boundedness of $J(v_n)<\infty$, the fact that 
$\|v_n\|_{L^2}=\|\nabla v_n\|_{L^2}=1$, and assumption \eqref{eq:necess}, we infer that the 
$L^4-$norm of the sequence $(v_n)_{n \in \mathbb N}$ is uniformly bounded away from zero, i.e. $\|v_n\|_{L^4}\ge C>0$. Hence by arguing as in Section 5 of \cite{PSSW} (which in itself is based on a result by Lieb \cite{L}), 
we conclude that $b_1\neq0$ and $b_2\neq0$.

Thus if we can prove that  $\{v_n\}_{n\in \mathbb N}$ indeed converges (not only weakly but) strongly towards $v_* \in H^1(\R^3)$, then clearly $v_* $ is a minimizer for the functional \eqref{eq:funct}. To this end, it suffices to prove 
that indeed $b_1= b_2=1$.

We consequently define the difference $w_n:=v_n-v_*$, for which we have
$$\|w_n\|_{L^2}^2 \stackrel{n \rightarrow \infty}{\longrightarrow} 1-b_1^2, \quad 
\|\nabla w_n\|_{L^2}^2 \stackrel{n \rightarrow \infty}{\longrightarrow}  1-b_2^2.$$
Hence
\begin{equation*}
j\le\lim_{n\to\infty}J(w_n)=\frac{(1-b_2^2)^{3/2}(1-b_1^2)^{1/2}}{-\lim_{n\to\infty}2V(w_n)}.
\end{equation*}
We now want to study the limit of $2V(w_n)$, as $n\to \infty$, which in view of \eqref{eq:104} can be written as
\begin{equation*}
\lim_{n\to\infty}2V(w_n)
=\lim_{n\to\infty}\pare{\lambda_1(\|w_n\|_{L^4}^4-\|v_n\|_{L^4}^4)+\lambda_2(\scalar{\K(w_n^2)}{w_n^2}
-\scalar{\K(v_n^2)}{v_n^2})}
-\frac{1}{j},
\end{equation*}
We first consider the term $\|w_n\|_{L^4}^4-\|v_n\|_{L^4}^4$. Using a classical result by Brezis and Lieb \cite{BL} we know that 
\begin{equation*}
\lim_{n\to\infty}\pare{\|w_n\|_{L^4}^4-\|v_n\|_{L^4}^4}=-\|v_*\|_{L^4}^4.
\end{equation*}
In a second step we rewrite the term $\scalar{\K(w_n^2)}{w_n^2}$ in the following way
\begin{align*}
\scalar{\K(w_n^2)}{w_n^2}=& \ \int_{\R^3}  w_n^2\K(v_n^2)dx+\int_{\R^3}  w_n^2\K(v_*^2)dx
-2\int_{\R^3}  v_nv_*\K(w_n^2)dx\\
=& \ \scalar{\K(v_n^2)}{v_n^2}+\int_{\R^3}  w_n^2\K(v_*^2)dx+\int_{\R^3} (v_*^2-2v_*v_n)\K(v_n^2)dx\\
& \ -2\int_{\R^3}  v_nv_*\K(w_n^2)dx
\end{align*}
By Sobolev embeddings, the weak convergence $w_n\wlim0$ in $H^1(\R^3)$ implies
$w_n^2\wlim0$ in $L^p(\R^3)$, for $1\le p\le3$. Hence we immediately get
\begin{equation*}
\int_{\R^3}  w_n^2\K(v_*^2)dx \stackrel{n \rightarrow \infty}{\longrightarrow}  0,
\end{equation*}
since $\|\K(v_*^2)\|_{L^2}^2\le\frac{8\pi}{3}\|v_*^2\|_{L^2}$. 
Next, we want to show that
\begin{equation}\label{eq:des}
\int_{\R^3} (v_*^2-2v_*v_n)\K(v_n^2)dx  \stackrel{n \rightarrow \infty}{\longrightarrow}  -\int_{\R^3}  v_*^2\K(v_*^2)dx.
\end{equation}
Indeed,
\begin{align*}
 \modu{\int_{\R^3} (v_*^2-2v_nv_*)\K(v_n^2)dx+\int_{\R^3}  v_*^2\K(v_*^2)dx} 
 \le & \ \modu{\int _{\R^3} w_n(v_n+v_*)\K(v_*^2)dx}\\
& \ +2\modu{\int_{\R^3}  w_nv_*\K(v_n^2)dx},
\end{align*}
and thus, by the Cauchy-Schwarz inequality
\begin{align*}
  & \modu{\int_{\R^3} (v_*^2-2v_nv_*)\K(v_n^2)dx+\int_{\R^3}  v_*^2\K(v_*^2)dx}   
  \le  \pare{\int_{\R^3}  w_n^2\K^2(v_*^2)dx}^{1/2}\times \\
  & \times \pare{\int_{\R^3} (v_n+v_*)^2dx}^{1/2}  +2\pare{\int_{\R^3}  w_n^2v_*^2dx}^{1/2}\pare{\int_{\R^3} \K^2(v_n^2)dx}^{1/2}.
\end{align*}
Having in mind, that $\mathcal \K$ (cf. Lemma \ref{lemma:1}) is bounded in $L^4(\R^3)$, we can again use the convergence of $w_n^2\wlim0$ in $L^2(\R^3)$, to obtain \eqref{eq:des}. 
By similar arguments one can show that the term $-2\int v_nv_*\K(w_n^2)dx$ converges to zero.
\smallskip

In summary, we conclude that 
\begin{equation*}
-\lim_{n\to\infty}2V(w_n)=2V(v_*)+\frac{1}{j},
\end{equation*}
which consequently implies
\begin{equation*}
j\le J(w_n)\stackrel{n \rightarrow \infty}{\longrightarrow} \frac{(1-b_1^2)^{1/2}(1-b_2^{2/3})^{3/2}}{2V(v_*)+\frac{1}{j}}.
\end{equation*}
Rearranging this inequality and using the fact that $-j 2V(v_*)=b_1b_2^3$ we obtain
\begin{equation}\label{eq:ineq}
\frac{1}{j}\le\frac{(1-b_1^2)^{1/2}(1-b_2^{2/3})^{3/2}}{j}+\frac{b_1b_2^3}{j}.
\end{equation}
On the other hand, by simple algebra we obtain
$$(1-b_1^2)^{1/2}(1-b_2^{2/3})^{3/2}+b_1b_2^3\le1, $$ 
and thus, also the reverse inequality corresponding to \eqref{eq:ineq} holds. Thus
\begin{equation*}
\frac{1}{j}=\frac{(1-b_1^2)^{1/2}(1-b_2^{2/3})^{3/2}}{j}+\frac{b_1b_2^3}{j}.
\end{equation*}
This identity holds if and only if, either: $b_1=b_2=0$, which is not allowed in our study, or else, if $b_1=b_2=1$. We therefore conclude that 
\begin{equation*} 
\|v_*\|_{L^2}=1, \qquad\|\nabla v_*\|_{L^2}=1,
\end{equation*}
which consequently implies the strong convergence of the minimizing sequence. In summary, this shows the infimum of the functional is attained at $v_*$. 
\end{proof}

\section{Proof of Theorem \ref{thm:main}}\label{sec:proof}

Recall that the functional $J(v)$ is invariant under the scaling \eqref{eq:scal}. Thus we can choose the minimizer 
$v_*$ to be such that $\|v_*\|_{L^2}=\beta_1, \|\nabla v_*\|_{L^2}=\beta_2$. Hence, by combining the results stated in 
Proposition \ref{prop:exist} and Lemma \ref{lemma:4}, we conclude that $v_*$ is a solution to 
\begin{equation}\label{pre}
- 3\beta_1\beta_2\Delta v_*+ 4j\pare{\lambda_1|v_*|^2v_*+\lambda_2\K(|v_*|^2)v_*}+ \widetilde \omega v_*  =0,
\end{equation}
with $\widetilde \omega =  \beta_1^{-1}\beta_2^3$.
Since $\beta_1, \beta_2>0$ are arbitrary, we can choose these parameters in the following way: 
$\beta_1=\frac{1}{6}\pare{\frac{\omega}{6}}^{-1/4}$, 
$\beta_2=\pare{\frac{\omega}{6}}^{1/4}$, so that \eqref{pre} becomes
\begin{equation}
- \frac{1}{2}\Delta v_*+ 4j\pare{\lambda_1|v_*|^2v_*+\lambda_2\K(|v_*|^2)v_*} + \omega v_* =0,
\end{equation}
where $j\equiv J(v_*)$, as in Proposition \ref{prop:exist}. Hence, by rescaling $u(x) = (4j)^{1/2}v_*(x)$, the existence of a weak solution $u\in H^1(\R^3)$ to the original problem \eqref{eq:stand_wave} is proved.

It remains to prove Assertion (1) - (5) of Theorem \ref{thm:main}:  In order to show Assertion (1) we note that, by straightforward calculations
\begin{align*}
\| \nabla |v| \|_{L^2} \le \| \nabla v \|_{L^2}, \quad \forall v \in H^1(\R^3).
\end{align*}
Thus $J(|v|) \le J(v)$ and hence $J(v_*) = J(|v_*|)$, which implies that the minimizer satisfies $v_*(x) \ge 0$. To prove Assertion (2) we can use Steiner symmetrization (cf. \cite{S}, proof of Theorem 3, and \cite{ATDL}. See also \cite{LL} for 
the  a closely related topic of re-arrangement inequalities.). Let $u^\#$ be the Steiner symmetrization of $u$ around a plane in $\R^3$. It is straightforward, cf. \cite[Theorem 3] {S} to prove that $J(u^\#)\le J(u)$, provided that $K^\#=K$, where $K^\#$ is the Steiner symmetrization of $K$. This means that we can find a minimizer $u$ of $J$ which has the same symmetries of $K$, hence it is radially symmetric in the $(x_1, x_2)-$plane and axially symmetric with respect to the $x_3-$axis. 
Assertion (3) then directly follows 
from the identity 
$$E(u) = \frac{1}{3} T(u)>0,$$ 
as proved in Lemma \ref{lemma:pohoz}. Assertion (4) follows from the simple observation that for $u \in H^1(\R^3)$ the right hand side of 
\begin{equation*}
-\frac{1}{2}\Delta u= - \lambda_1|u|^2u - \lambda_2(K\ast|u|^2)u-\omega u,
\end{equation*}
is bounded in $L^2(\R^3)$, due to the properties of $\mathcal K$ stated in Lemma \ref{lemma:1} and the Gagliardo-Nirenberg inequality $\|u\|_{L^4}^4\le \|u\|_{L^2} \|\nabla u\|_{L^2}^3 $. 
Thus we conclude that in fact $u\in H^2(\R^3)$ and an induction argument shows $u\in H^s(\R^3)$ for all $s\ge 1$.

Finally, in order to prove the asymptotic decay as $|x|\to \infty$, all one has to do is follow the arguments given in Step 6 of the proof of \cite[Theorem 2.4]{Ci} (which in itself 
follows from \cite{Caz}). 

\begin{remark}
As a by-product of our analysis we obtain, that there exists a $C>0$, such that for any $f\in H^1(\R^3)$ the following inequality holds
\begin{equation*}
-\lambda_1\|f\|_{L^4}^4-\lambda_2\scalar{K\ast|f|^2}{|f|^2}\le C\|\nabla f\|_{L^2}^3\|f\|_{L^2},
\end{equation*}
where the optimal constant $C=C_*$ is given by 
$$C_{*}=\frac{-\lambda_1\|v_*\|_{L^4}^4-\lambda_2\scalar{K\ast|v_*|^2}{|v_*|^2}}{\|\nabla v_*\|_{L^2}^3\|v_*\|_{L^2}},$$
with $v_*$ being the minimizer of $J(v)$, as guaranteed by Proposition \ref{prop:exist}.
\end{remark}

\end{document}